\theoremstyle{plain}
\newtheorem{theorem}{Theorem}
\newtheorem{corollary}[theorem]{Corollary}
\newtheorem{proposition}[theorem]{Proposition}
\newtheorem{lemma}[theorem]{Lemma}
\theoremstyle{definition}
\newtheorem{definition}[theorem]{Definition}
\newtheorem{assumption}[theorem]{Assumption}
\newtheorem{example}[theorem]{Example}
\theoremstyle{remark}
\newtheorem{remark}[theorem]{Remark}
\newcommand{\norm}[1]{\lVert #1 \rVert}
\newcommand{\nnnorm}[1]{{\left\vert\kern-0.25ex\left\vert\kern-0.25ex\left\vert #1 \right\vert\kern-0.25ex\right\vert\kern-0.25ex\right\vert}}
\newcommand{\abs}[1]{\lvert #1 \rvert}
\newcommand{\Abs}[1]{\left\lvert #1 \right\rvert}
\begin{document}

\begin{frontmatter}



\title{Stability analysis of linear systems subject to regenerative switchings}


\author[ttu]{M.~Ogura\corref{cor}}
\ead{masaki.ogura@ttu.edu}

\author[ttu]{C.~F.~Martin}
\ead{clyde.f.martin@ttu.edu}

\cortext[cor]{Corresponding author}

\address[ttu]{Department of Mathematics and Statistics,
Texas Tech University, Broadway and Boston, 
Lubbock, TX 79409-1042, USA}

\begin{abstract}
This paper investigates the stability of switched linear systems whose
switching signal is modeled as a stochastic process called a
regenerative process. We show that the mean stability of such a
switched system is characterized by the spectral radius of a matrix.
The matrix is obtained by taking the expectation of the transition
matrix of the system on one cycle of the underlying regenerative
process. The characterization generalizes Floquet's theorem for the
stability analysis of linear time-periodic systems. We illustrate the
result with the stability analysis of a linear system with a
failure-prone controller under periodic maintenance.
\end{abstract}

\begin{keyword}
Switched linear system \sep regenerative process \sep mean stability \sep periodic maintenance


\end{keyword}

\end{frontmatter}


\section{Introduction}

The stability analysis of stochastic switched linear systems has
attracted a significant amount of attention in the last two decades.
In particular, \change{a lot of attentions has been paid to}{a lot of
effort has been put on} their mean stability, which requires that the
power of the norm of the state variable converges to zero in
expectation. Some early results on the mean stability of switched
linear systems with an independent and identically distributed
(i.i.d.)~switching signal can be found
in~\cite{Kalman1957,Bertram1959}. \change{Initiated by the results in
[3, 4], a great deal of efforts has been put on analyzing the mean
stability of so-called Markov jump linear systems~{\cite{Costa2013}},
which are switched linear systems whose switching signal is a
homogeneous Markov processes.}{The stability
characterizations~\cite{Feng1992,Costa1993} of linear systems subject
to switching by homogeneous Markov process now form the basis of the
various types of optimal control of so-called Markov jump linear
systems~{\cite{Costa2013}}.} The stability characterizations of
switched linear systems driven by an extension of homogeneous Markov
processes called \add{homogeneous} semi-Markov
processes~\cite{Janssen2006} are available in
\cite{Ogura2013f,Antunes2013}.

It is known that \del{both continuous-time}{\delws}homogeneous Markov
processes having certain irreducibility and recurrence properties and
\add{also} discrete-time i.i.d.~stochastic processes are special cases
of a more general class of stochastic processes called {\it
regenerative processes}~\cite{Sigman1993}. Firstly introduced by
Smith~\cite{Smith1955}, regenerative processes have found applications
especially in queuing systems~\cite{Zhang2008b} and network
reliability analysis~\cite{Logothetis1997}. As we will see later in
Example~\ref{eg:process}, regenerative processes are also suitable to
describe a controlled system under periodic
maintenance~\cite{Canfield1986,Nakagawa1986,Gertsbakh2000}. Despite
the above facts, as far as we are aware of, no effort has been made to
investigate switched linear systems with a regenerative switching
signal \add{in the literature of systems and control theory}.

The aim of this paper is to give the characterization of the mean
stability of a switched linear system with a regenerative switching
signal, which we call a {\it regenerative switched linear system}. We
show that, if the exponent of the mean stability is even or the system
is positive~\cite{Ogura2013f,Bolzern2014}, then the mean stability of
the system is characterized by the spectral radius of a matrix. The
matrix is obtained as the expected value of the
lift~\cite{Parrilo2008} of the transition matrix of the system
\add{over one cycle of the underlying regenerative process}. The proof
makes use of a stability-preserving discretization of the system at
the embedded renewal process of the underlying regenerative process.
The characterization in particular generalizes well-known
Floquet's theorem for the stability analysis of linear time-periodic
systems~\cite{Farina2000}.

This paper is organized as follows. After preparing necessary
notations and conventions, in Section~\ref{sec:regsystems} we recall
the definition of regenerative processes and then introduce
regenerative switched linear systems. Then
Section~\ref{sec:mainresult} presents the main result of this paper,
which is followed by an example. The proof of the main result is given
in Section~\ref{sec:proof}. Then Section~\ref{sec:disc} discusses the
discrete-time case.

\subsection{Mathematical preliminaries}

Let $(\Omega, \mathcal M, P)$ be a probability space. For a{n}
{integrable} random variable $X$ on $\Omega$ its expected value is
denoted by $E[X]$. The random variables that appear in this paper will
be assumed to be integrable.

When $x\in \mathbb{R}^n$ is nonnegative entrywise we write $x\geq 0$.
The standard Euclidean norm on $\mathbb{R}^n$ is denoted by
$\norm{\cdot}$. For \del{a general}{\delws}$m\geq 1$, the $m$-norm on
$\mathbb{R}^n$ is defined by $\norm{x}_m = (\sum_{i=1}^n
\abs{x_i}^m)^{1/m}$. The symbol $1_n$ denotes the column vector of
length $n$ whose entries are all $1$. It is easy to see $\norm{x}_1 =
1_n^\top x$ if $x\geq 0$. Let $I$ and $O$ denote the identity and the
zero matrices, respectively. We say that $A \in \mathbb{R}^{n\times
n}$ is Schur stable if its spectral radius $\rho(A)$ is less than one.

The $m$-lift of~$x \in \mathbb{R}^n$, denoted by~$x^{[m]}$, is defined
\cite{Parrilo2008} as the real vector of length~$n_m =
\binom{n+m-1}{m}$ with its elements being the lexicographically
ordered monomials~$\sqrt{\alpha!}\,x^\alpha$ indexed by all the
possible exponents $\alpha = (\alpha_1, \dotsc, \alpha_n) \in \{0, 1,
\dotsc, m\}^n$ such that \mbox{$\alpha_1 + \cdots + \alpha_n = m$},
where $\alpha! := {m!}/(\alpha_1! \dotsm \alpha_n!)$. It
holds~\cite{Parrilo2008} that
\begin{equation}\label{eq:norm^[m]}
\norm{x^{[m]}} = \norm{x}^m. 
\end{equation}
We then define $A^{[m]} \in \mathbb{R}^{n_m\times n_m}$ as the
unique matrix~\cite{Parrilo2008} satisfying $(Ax)^{[m]} = A^{[m]}
x^{[m]}$ for every $x\in\mathbb{R}^n$. For any matrix $B$ it holds
that
\begin{equation}\label{eq:(AB)^[m]}
(AB)^{[m]} = A^{[m]} B^{[m]}
\end{equation}
provided the product $AB$ is well defined. We also define $A_{[m]} \in
\mathbb{R}^{n_m \times n_m}$ as the unique real matrix
\cite{Brockett1973,Barkin1983} such that, for every
$\mathbb{R}^n$-valued differentiable function~$x$ {on $\mathbb{R}$}
satisfying $dx/dt = Ax$, it holds that ${dx^{[m]}}/{dt} = A_{[m]}
x^{[m]}$. It is easy to check that
\begin{equation}\label{eq:(e^At)^[m]}
\bigl(e^{At}\bigr)^{[m]} = e^{A_{[m]}t}
\end{equation}
for every $t\geq 0$.

\section{Regenerative switched linear systems} \label{sec:regsystems}
 
Let us first recall the definition of regenerative stochastic
processes~\cite{Sigman1993}. Throughout this paper we fix an
underlying probability space $(\Omega, \mathcal M, P)$.

\begin{definition}
A stochastic process $\sigma = \{\sigma_t\}_{t\geq 0}$ is called a
{\it regenerative process} if there exists a random variable~$R_1 >
0$, called a regeneration epoch, such that the following statements
hold.
\begin{itemize}
\item $\{\sigma_{t+R_1}  \}_{t\geq 0}$ is independent of $\{\{
\sigma_t \}_{t<R_1}, R_1\}$;

\item $\{\sigma_{t+R_1}  \}_{t\geq 0}$ is stochastically equivalent to
$\{\sigma_t \}_{t\geq 0}$.
\end{itemize}
\end{definition}

In the following we quote some consequences of the above definition
from \cite{Sigman1993}. By repeatedly applying the definition, one can
obtain a sequence of independent and identically distributed random
variables $\{R_k\}_{k\geq 1}$ called {\it cycle lengths}, which can be
used to break $\sigma$ into independent and identically distributed
{\it cycles} $\{\sigma_t \}_{0\leq t< R_1}$, $\{ \sigma_{t}
\}_{R_1\leq t< R_1 + R_2}$, $\dotsc$. Then the stochastic
process~$\{Z_k\}_{k\geq 1}$ defined by $Z_k = R_1 + \cdots + R_k$ is
called  the {\it embedded renewal process} of~$\sigma$. Throughout
this paper, for the sake of convenience, we set $Z_0 = 0$ and call
$\{Z_k\}_{k\geq 0}$ as the embedded renewal process of $\sigma$.

The next example presents a regenerative process that is not a
homogeneous Markov process \change{but}{and} is of a systems and
control theoretical interest.

\newcommand{\bm}[1]{\begin{bmatrix}#1\end{bmatrix}}
\begin{example}\label{eg:process}
Consider a dynamical system with a failure-prone
controller~\cite{Zhang2008a}. Let us model the controlled system as a
switched system with the two modes $\{1, 2\} = \{ \text{Non-failure},
\text{Failure} \}$. Instead of assuming that the transition of the
mode \del{of the system}{\delws}can be described by a homogeneous
Markov process (see, e.g., \cite{Bolzern2010a,Mariton1989}), let us
consider the scenario when the controlled system is under periodic
maintenance, which is commonly employed in the literature from
reliability theory~\cite{Canfield1986,Nakagawa1986,Gertsbakh2000}.

Let the stochastic process~$\{Z_k\}_{k\geq 0}$  represent the times at
which a maintenance is performed. For simplicity we assume that
$\sigma_{Z_k} = 1$ with probability one for every $k\geq 0$, i.e.,
that every maintenance repairs a failure with probability one within a
negligible time period. We set $Z_0 = 0$. Furthermore we assume that
$R_k = Z_{k}-Z_{k-1}$ equals $T + \varDelta_k$, where $T>0$ is a
constant and $\{\varDelta_k\}_{k=1}^\infty$ are independent and
identically distributed random variables. $T$ represents the designed
period of the  maintenance and $\varDelta_k$ models its random
perturbation. We assume that the length of the time for which the
process $\sigma$ stays at mode $1$ after the reset at
$t=Z_k$ follows an exponential distribution with parameter~$\lambda>0$.
In other words we are assuming that, on any interval of a sufficiently
small length $h$, the probability of the occurrence of a failure is
approximately equal to $\lambda h$. Then $\sigma$ is clearly a
regenerative process with a regeneration epoch $R_1$ and the embedded
renewal process $\{Z_k\}_{k\geq 0}$.

Notice that $\sigma$ is not a \del{homogeneous}{\delws}Markov process because
the length of the time while mode $2$ is active depends on \add{past information, i.e., }\change{the
random variables $\{Z_k\}_{k=1}^\infty$}{the length of the last time
interval while mode $1$ was active}.
\end{example}

Then we introduce the class of switched linear systems studied in this
paper. Let $\{\sigma_t \}_{t\geq 0}$ be a regenerative process that
can take values in a set~$\mathcal S$. Let $\{ A_s\}_{s\in \mathcal S}$ be
a family of real $n\times n$ matrices indexed by $\mathcal S$. Then we call the
stochastic differential equation
\begin{equation*} 
\Sigma : 
\dfrac{dx}{dt} = A_{\sigma_t} x(t)
\end{equation*}
as a {\it regenerative switched linear system}. We assume that $x(0) =
x_0 \in \mathbb{R}^n$ is a constant vector.

The stability of $\Sigma$ is defined in the following standard
manner. 
\begin{definition} 
Let $m$ be a positive integer. 
\begin{itemize}
\item $\Sigma$ is said to be \emph{exponentially $m$th mean stable}
if there exist $\alpha>0$ and $\beta > 0$ such that $E[\norm{x(t)}^m]
\leq \alpha e^{-\beta \mathchangescript{k}{t}}\norm{x_0}^m$ for all
$x_0$ \add{and $t\geq 0$}.

\item $\Sigma$ is said to be \emph{stochastically $m$th mean stable}
if $\int_0^\infty E[\norm{x(t)}^m]\,dt < \infty$  for any $x_0$.
\end{itemize}
\end{definition}

We also introduce the notion of positivity for $\Sigma$ following
\cite{Ogura2013f,Bolzern2014}.

\begin{definition}
We say that $\Sigma$ is {\it positive} if $x_0 \geq 0$ implies $x(t)
\geq 0$ with probability one for every $t \geq 0$.
\end{definition}

For $\Sigma$ to be positive it is clearly sufficient that all the
matrices $A_s$ are Metzler, i.e., the off-diagonal entries of each
$A_s$ are all nonnegative~\cite{Farina2000}. However it is not
necessary, as illustrated in the following non-trivial example.

\begin{example}
Consider a switched linear system with $\mathcal S = \{1,
2\}$ and
\begin{equation*}
A_1 = \frac{1}{2}\begin{bmatrix}
1&1\\1&1
\end{bmatrix},\ 
A_2 = \begin{bmatrix}
0&1\\-1&0
\end{bmatrix}.
\end{equation*}
Since $e^{A_1t} = I + (e^t - 1)A_1$, a simple calculation shows the
existence of $T>0$ such that if $t\geq T$ then, for every $x_0 \geq
0$, the vector $e^{A_1t}x_0$ is in the sector $S = \{x\in
\mathbb{R}^2: x\geq 0,\ \arg x \geq 1 \}$. Then we construct a
regenerative process $\sigma$ as follows. Set $R_1 = T + 1$ and let
$h$ follow the uniform distribution on $[T, T+1]$. Define $\sigma$ on
the first cycle $[0, R_1)$ by
\begin{equation*}
\sigma_t = 
\begin{cases}
1,&0\leq t\leq h\\
2, &h\leq t< R_1
\end{cases}
\end{equation*}
and extend this definition to the whole interval $[0, \infty)$
regeneratively. \add{We can see that the above defined $\sigma$ is a
regenerative process as in Example~\ref{eg:process}.} Then, since the
second mode decreases the argument of the state vector at most $R_1 -
h <1$, we can see that $x(t)$ stays in the positive orthant  for every
$t\geq 0$ whenever $x_0\geq 0$. Therefore $\Sigma$ is positive
although $A_2$ is not Metzler.
\end{example}

\section{Stability characterization} \label{sec:mainresult}

This section states the characterization of the mean stability of
regenerative switched linear systems and also presents an example to
illustrate the result. We state the next assumption.

\begin{assumption}\ 
\begin{enumerate}[label={A\arabic*. },ref={A\arabic*}]
\item \label{item:evenORnon-neg} Either $m$ is even or $\Sigma$ is
positive.

\item \label{item:hk<T} \change{There exists $T\geq 0$ such that $R_1 \leq T$
with probability one}{$R_1$ is essentially bounded}.

\item \label{item:Mbdd} The set $\{ A_s\}_{s\in \mathcal S}$ is bounded.
\end{enumerate}
\end{assumption}

\ref{item:evenORnon-neg} covers mean square stability ($m=2$), which
has been the central stability notion of stochastic switched linear
systems in the literature~\cite{Feng1992,Antunes2013,Fang2002a}. The
second condition~\ref{item:hk<T} on the boundedness of cycle lengths
is crucial. Similar assumptions were employed for the stability
analysis of semi-Markov jump linear systems~\cite{Ogura2013f} and
stochastic hybrid systems with renewal transitions~\cite{Antunes2013}.
\ref{item:Mbdd} is only to ensure that the state variable does not
diverge in a finite time and thus is not restrictive.

In order to state the main result we need fundamental
matrices~\cite{Farina2000} of the system $\Sigma$. For all $\omega \in
\Omega$, $t_0 \geq 0$, and $t\geq t_0$ let us define $\Phi(\omega;
t_0, t) \in \mathbb{R}^{n\times n}$ by the differential equation
\begin{equation*}
\frac{\partial \Phi}{\partial t}
= 
A_{\sigma_t(\omega)} \Phi(\omega; t_0, t) 
,\  
\Phi(\omega; t_0, t_0) = I_n.
\end{equation*}
Then define the $\mathbb{R}^{n\times n}$-valued random variables
$\{M_k\}_{k\geq 0}$ by
\begin{equation}\label{eq:def:M_k}
M_k(\omega) := \Phi(\omega; Z_k(\omega), Z_{k+1}(\omega)), 
\end{equation}
which expresses the transition of $x$ from $t=Z_k$ to $t=Z_{k+1}$.

The next theorem is the main result of this paper.

\begin{theorem}\label{theorem:main}
The following statements are equivalent.
\begin{enumerate}
\item \label{item:expsta} $\Sigma$ is exponentially $m$th mean
stable.

\item \label{item:stosta} $\Sigma$ is stochastically $m$th mean
stable.

\item \label{item:Schsta} $E[M_0^{[m]}]$ is Schur stable.
\end{enumerate}
\end{theorem}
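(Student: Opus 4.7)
The plan is to reduce the analysis to a deterministic linear recursion on $\mathbb{R}^{n_m}$ by sampling along the embedded renewal process. The chain rule for the state transition matrix and the identity $(AB)^{[m]} = A^{[m]} B^{[m]}$ from~\eqref{eq:(AB)^[m]} give $x^{[m]}(Z_k) = M_{k-1}^{[m]}\cdots M_0^{[m]}\,x_0^{[m]}$. The regenerative structure makes $\{M_k\}_{k\geq 0}$ independent and identically distributed, so taking expectations yields
\begin{equation*}
E\bigl[x^{[m]}(Z_k)\bigr] = E\bigl[M_0^{[m]}\bigr]^k\, x_0^{[m]}.
\end{equation*}

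The implication (\ref{item:expsta})$\Rightarrow$(\ref{item:stosta}) is immediate. For (\ref{item:Schsta})$\Rightarrow$(\ref{item:expsta}), I would use Assumption~\ref{item:evenORnon-neg} to dominate $\norm{x}^m$ by a nonnegative linear functional of $x^{[m]}$. When $m$ is even, $\sum_i x_i^m$ is such a functional and is equivalent to $\norm{x}^m$ by norm equivalence on~$\mathbb{R}^n$. When $\Sigma$ is positive, $\Phi(0,Z_k)$ and hence its $m$-lift are nonnegative almost surely, and after a preliminary reduction to $x_0\geq 0$ via entrywise absolute values one obtains $\norm{x(Z_k)}^m \leq 1_{n_m}^\top \Phi(0,Z_k)^{[m]} x_0^{[m]}$. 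In either case, Schur stability of $E[M_0^{[m]}]$ combined with the displayed identity yields $E[\norm{x(Z_k)}^m]\leq C\gamma^k\norm{x_0}^m$ for some $\gamma<1$. To pass to continuous time, Assumptions~\ref{item:hk<T} and~\ref{item:Mbdd} give the pathwise bound $\norm{x(t)}^m\leq e^{mT\sup_s\norm{A_s}}\norm{x(Z_k)}^m$ on $[Z_k,Z_{k+1}]$; combined with the almost sure inequality $Z_k\leq kT$, summing a geometric tail produces exponential decay of $E[\norm{x(t)}^m]$.

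For the converse (\ref{item:stosta})$\Rightarrow$(\ref{item:Schsta}), Jensen's inequality together with~\eqref{eq:norm^[m]} gives $\norm{E[M_0^{[m]}]^k x_0^{[m]}} \leq E[\norm{x(Z_k)}^m]$. To show that the right-hand side vanishes, I would establish the cycle-wise lower bound
\begin{equation*}
\int_0^\infty E[\norm{x(t)}^m]\,dt \;\geq\; c \sum_{k=0}^\infty E[\norm{x(Z_k)}^m],
\end{equation*}
obtained by bounding $\norm{x(t)}^m\geq e^{-mT\sup_s\norm{A_s}}\norm{x(Z_k)}^m$ on $[Z_k,Z_{k+1}]$, integrating, taking expectations, and exploiting the independence of $R_{k+1}$ from $x(Z_k)$ together with $E[R_1]>0$. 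Stochastic stability then forces $E[M_0^{[m]}]^k x_0^{[m]}\to 0$ for every $x_0\in\mathbb{R}^n$.

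The main obstacle lies in the final step of the converse: the pointwise decay obtained above is available only on the Veronese image $\{x_0^{[m]}:x_0\in\mathbb{R}^n\}$, which is a nonlinear subset of $\mathbb{R}^{n_m}$. To conclude Schur stability of $E[M_0^{[m]}]$ itself one must invoke the classical fact that this image linearly spans the ambient space $\mathbb{R}^{n_m}$, so that $E[M_0^{[m]}]^k\to 0$ in operator norm. I note that Assumption~\ref{item:evenORnon-neg} is not used in this converse direction; its role is confined to (\ref{item:Schsta})$\Rightarrow$(\ref{item:expsta}).
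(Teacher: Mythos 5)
Your proposal is correct and follows essentially the same route as the paper: discretize at the embedded renewal process, use the i.i.d.\ cycle structure and the lift identity to get $E[x(Z_k)^{[m]}]=E[M_0^{[m]}]^k x_0^{[m]}$, handle the two cases of Assumption~\ref{item:evenORnon-neg} with the same linear-functional dominations, bridge continuous time via a Gronwall bound on each cycle, and close the converse with the spanning of the Veronese image. The only difference is organizational --- the paper factors the discrete-time part into a standalone proposition on i.i.d.\ switched systems and works with an eigenvector of $E[M_0^{[m]}]$ rather than with convergence of powers on a spanning set, which is an equivalent argument.
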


Based on the theorem and continuing from Example~\ref{eg:process}, the
next example presents the stability analysis of a linear
time-invariant system with a failure-prone controller under periodic
maintenance.

\begin{example}
Consider the internally unstable linear time-invariant system $dx/dt =
Ax + Bu$ with the failure-prone controller
\begin{equation*}
u(t) = \begin{cases}
0,\quad \text{a fault is occuring}\\
Kx(t),\quad \text{otherwise}
\end{cases}
\end{equation*}
where 
\begin{equation*}
A = \begin{bmatrix}-0.4 &0.2\\ -0.1&0.5
\end{bmatrix},
\ 
B = \begin{bmatrix}
0\\1
\end{bmatrix},
\ 
K = \begin{bmatrix}
-0.1& -1.6
\end{bmatrix}.
\end{equation*}
The stabilizing feedback gain $K$ is obtained by solving a linear
quadratic regulator problem. We assume that the transition between the
modes $\{1, 2\} = \{ \text{Non-failure}, \text{Failure} \}$ follows
the regenerative process~$\sigma$ described in
Example~\ref{eg:process}. With this labeling we have $A_1 = A+BK$ and
$A_2 = A$.  For simplicity \add{we set $\lambda = 1$} and also
\del{we}{\delws}suppose that each $\Delta_k$ follows \change{a}{the} uniform
distribution on \add{the interval}~$[-0.1 T, 0.1 T]$ independently.

Let the random variable $h$ denote the first time in \change{$[0,
\infty)$}{$[0, R_1)$} when the transition to mode $2$ occurs. \add{We
set $h = R_1$ when a transition does not occur on the interval.} Then
one can see $M_0 = e^{A_{2}\max(0, R_1-h)} e^{A_{1}\min(R_1,h)}$. If
we let $\bar A_i:= (A_i)_{[m]}$ ($i=1,2$) then
equations~\eqref{eq:(e^At)^[m]} and \eqref{eq:(AB)^[m]} show
\begin{equation*}
M_0^{[m]} = e^{\bar A_2 \max(0, R_1-h)} e^{\bar A_1 \min(R_1,h)}. 
\end{equation*}
Here we recall that for square matrices $F_1, F_2$ with the same
dimensions and $t\geq 0$ it holds that~\cite{Chen1994}
\begin{equation*}
\exp\left(
\begin{bmatrix}
F_1& I\\O&F_2
\end{bmatrix}t
\right)
=
\begin{bmatrix}
* & \int_0^t e^{(t-\tau)F_1}e^{\tau F_2}\,d\tau
\\
0 & *
\end{bmatrix}.
\end{equation*}
Using this identity and the independence of $h$ and $R_1$, since $h$
follows the exponential distribution with mean $1$, we can
show that
\begin{equation*}
\begin{aligned}
E[M_0^{[m]}]
&=
\int_{0.9 T}^{1.1 T} \int_0^\infty 
e^{\bar A_2\max(0, t-s)} e^{\bar A_1\min(t,s)}
 e^{-s}ds\,\frac{dt}{0.2 T} 
\\
&=
\frac{5}{T}\int_{0.9 T}^{1.1 T} \int_0^t
e^{\bar A_2(t-s)} e^{(\bar A_1 - I)s}
\,ds\,{dt}
+
\frac{5}{T}\int_{0.9 T}^{1.1 T} \int_t^\infty
e^{\bar A_1t} e^{-s}
\,ds\,{dt}
\\
&=
\frac{5}{T} \begin{bmatrix}
I &O
\end{bmatrix}
\int_{0.9 T}^{1.1 T} 
\exp\left(\begin{bmatrix}
\bar A_2&I\\O&\bar A_1 - I
\end{bmatrix}t\right)
\,dt \begin{bmatrix}
O \\ I
\end{bmatrix}
\\&
\hspace{2cm}+
\frac{5}{T}\int_{0.9 T}^{1.1 T} 
e^{(\bar A_1 - I)t}\,{dt}.
\end{aligned} 
\end{equation*}
Figure~\ref{figure:stability} shows the graph of the spectral radius
of $E[M_0^{[2]}]$ as $T$ varies \change{from $1$ to $3$}{from $0$ to
$2$}.
\begin{figure}[tp]
\centering
\includegraphics[width=8.1cm]{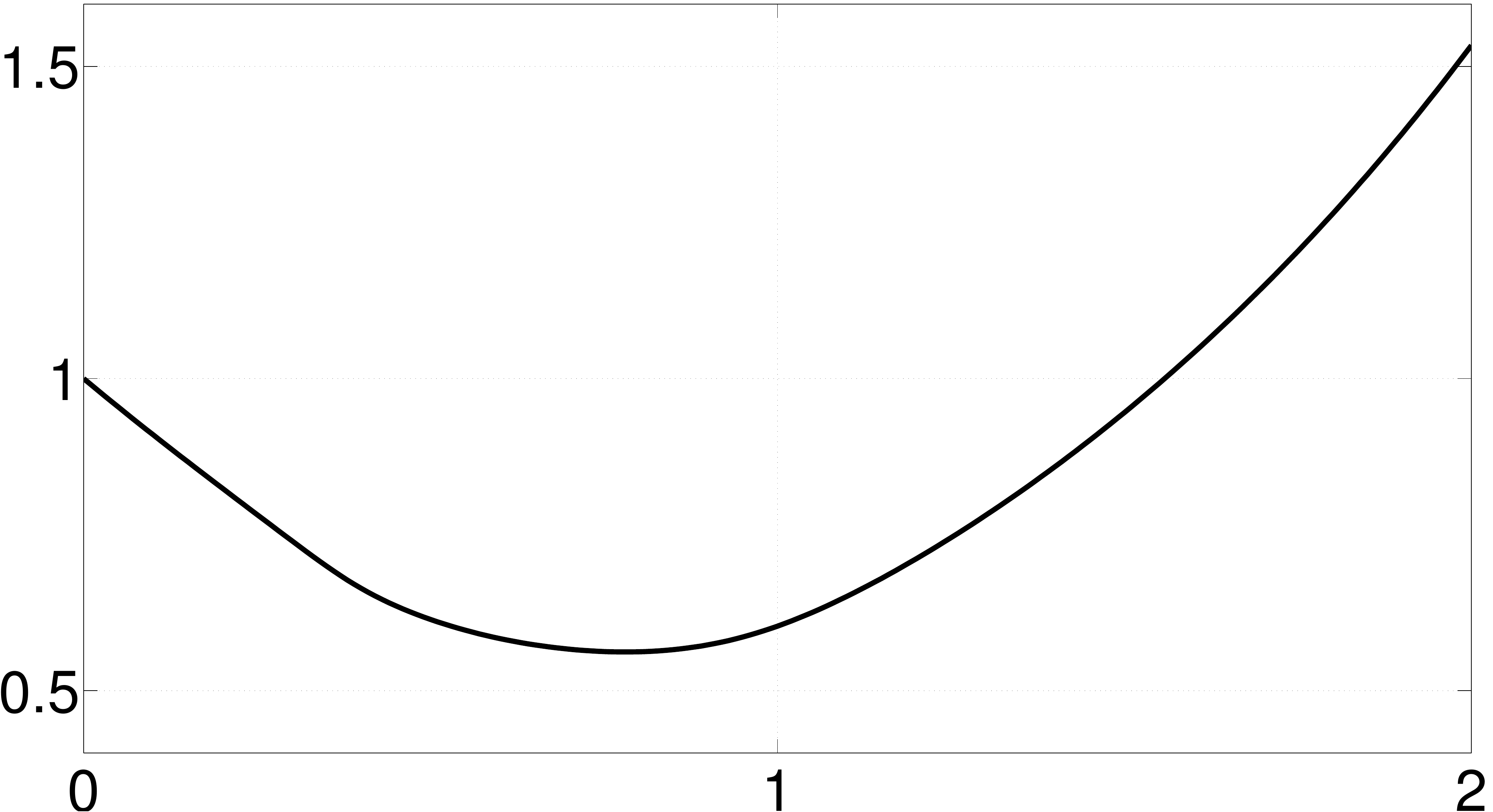}
\caption{Spectral radius of $E[\Phi^{[2]}]$ as $T$ varies}
\label{figure:stability}
\end{figure}
As is expected, instability is caused by making the period of the
maintenance longer. We can see that, by Theorem~\ref{theorem:main},
$\Sigma$ is mean square stable if and only if $T<1.55$. The
computation of the matrix $E[M_0^{[2]}]$ is performed with MATLAB.
Figure~\ref{figure:samplepaths} shows 20 sample paths of
$\norm{x(t)}^{\add 2}$ when $T=1.25$.
\begin{figure}[tp]
\centering
\includegraphics[width=8.1cm]{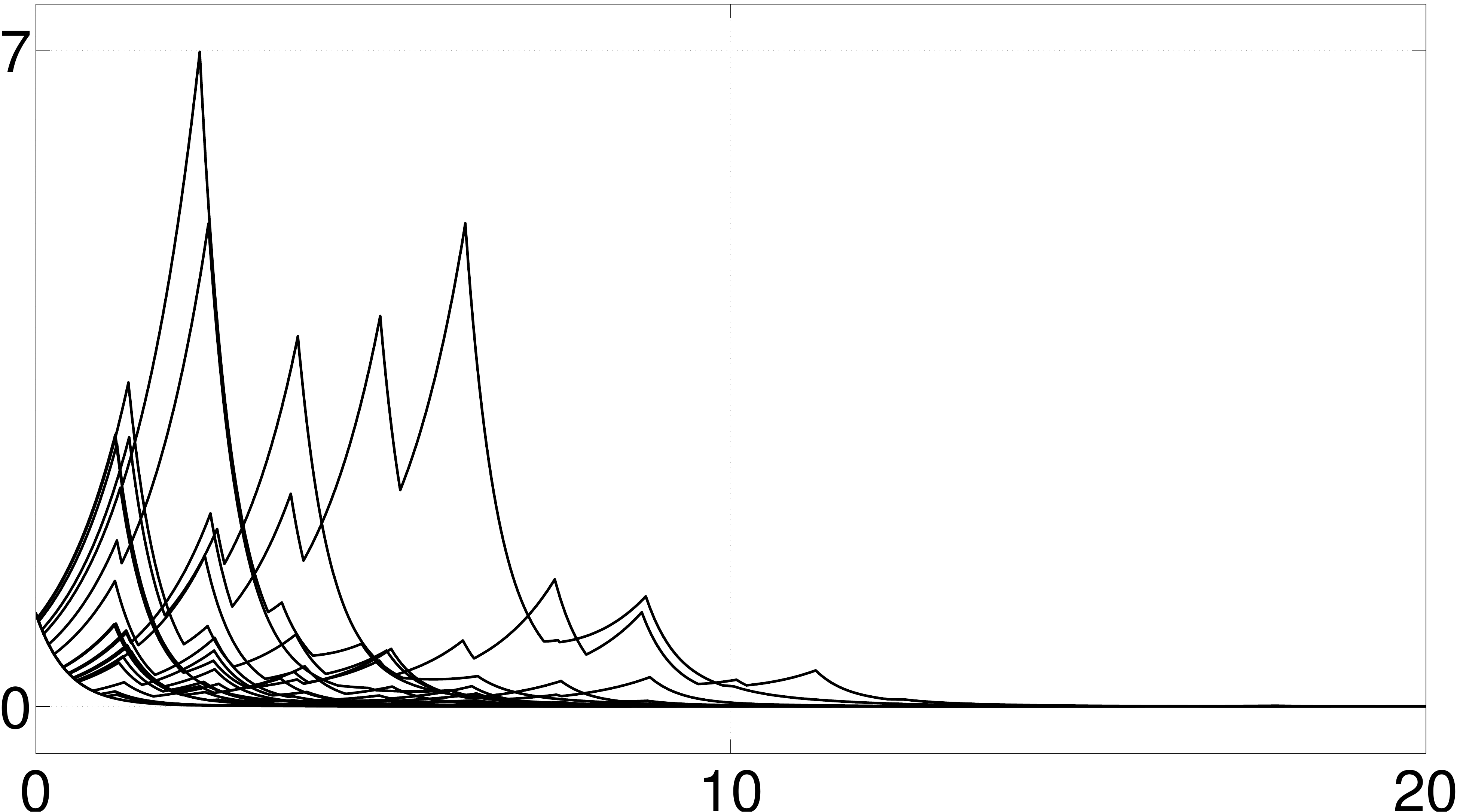}
\caption{20 sample paths of $\norm{x(t)}^2$}
\label{figure:samplepaths}
\end{figure}
\end{example}

\begin{remark}
Theorem~\ref{theorem:main} extends celebrated Floquet's
theory~\cite{Farina2000} for the stability analysis of linear
time-periodic systems. In fact, if the process $\sigma$ is a periodic
function with the period~$T$ then we have $\rho(E[M_0^{[m]}]) =
\rho(M_0^{[m]}) = \rho(M_0)^m$. Therefore, by
Theorem~\ref{theorem:main}, the linear time-periodic system $\Sigma$
is stable in the standard sense if and only if $\rho(M_0) < 1$, which
is the main consequence of Floquet's theory.
\end{remark}

\section{Proof of the main result} \label{sec:proof}

The proof of Theorem~\ref{theorem:main} is based on the discretization
of $\Sigma$ at the embedded renewal process of the underlying
regenerative process. In order to analyze the stability of the
discretization, in the next section we first present the stability
analysis of discrete-time switched linear systems with
i.i.d.~parameters. Then in Section~\ref{subsec:proof} we give the
proof of Theorem~\ref{theorem:main}.

\subsection{Stability of discrete-time linear systems with i.i.d.~parameters} 

Let $\{F_k\}_{k\geq 0}$ be independent and identically distributed
random variables following a distribution~$\mu$ on
$\mathbb{R}^{n\times n}$. Consider the discrete-time switched linear
system
\begin{equation*}
\Sigma_\mu: x_d(k+1) = F_k x_d(k),\ k\geq 0
\end{equation*}
where $x_d(0) = x_0 \in \mathbb{R}^n$ is a constant vector. The mean
stability of $\Sigma_\mu$ is introduced as follows.

\begin{definition}\label{defn:sta:sigmamu} Let $m$ be a positive integer.
\begin{itemize}
\item $\Sigma_\mu$ is said to be \emph{exponentially $m$th mean
stable} if there exist $\alpha>0$ and $\beta > 0$ such that
\begin{equation}\label{eq:mu:expsta}
E[\norm{x_d(k)}^m] \leq \alpha e^{-\beta k} \norm{x_0}^m
\end{equation}
for all $x_0$ \add{and $k\geq 0$}.

\item $\Sigma_\mu$ is said to be \emph{stochastically $m$th mean
stable} if $\sum_{k=0}^\infty E[\norm{x_d(k)}^m] < \infty$ for any
$x_0$.
\end{itemize}
\end{definition}

Also we define the positivity of $\Sigma_\mu$ in the same way as for
$\Sigma$.
\begin{definition}
We say that $\Sigma_\mu$ is positive if $x_0\geq 0$ implies $x(k) \geq
0$ with probability one for every $k \geq 0$.
\end{definition}

When we check the exponential mean stability of a positive
$\Sigma_\mu$, we can without loss of generality assume that its
initial state is nonnegative. 

\begin{lemma}\label{lem:Lemma36result}
Assume that $\Sigma_\mu$ is positive. Then $\Sigma_\mu$ is
exponentially $m$th mean stable if and only if there exist $\alpha>0$
and $\beta > 0$ such that \eqref{eq:mu:expsta} holds for all $x_0 \geq
0$ \add{and $k\geq 0$}.
\end{lemma}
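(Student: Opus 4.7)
The plan is to prove the forward direction by observation and the reverse direction by decomposing an arbitrary initial state into its positive and negative parts.

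The forward direction (``only if'') is immediate: the bound \eqref{eq:mu:expsta} for all $x_0$ trivially implies the same bound restricted to $x_0 \geq 0$.

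For the converse, I will fix an arbitrary $x_0 \in \mathbb{R}^n$ and write it as $x_0 = x_0^+ - x_0^-$, where $x_0^+$ and $x_0^-$ denote the entrywise positive and negative parts of $x_0$. Both $x_0^\pm$ are nonnegative and satisfy $\norm{x_0^\pm} \leq \norm{x_0}$ (in fact $\norm{x_0^+}^2 + \norm{x_0^-}^2 = \norm{x_0}^2$). Let $x_d^\pm(k)$ denote the trajectories of $\Sigma_\mu$ from initial conditions $x_0^\pm$ driven by the same sample path $\{F_k\}_{k\geq 0}$. By linearity, $x_d(k) = x_d^+(k) - x_d^-(k)$ pathwise, so the triangle inequality gives $\norm{x_d(k)} \leq \norm{x_d^+(k)} + \norm{x_d^-(k)}$.

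Next I will apply the elementary inequality $(a+b)^m \leq 2^{m-1}(a^m + b^m)$ for $a,b \geq 0$, raise the triangle inequality to the $m$th power, and take expectations to obtain
\begin{equation*}
E[\norm{x_d(k)}^m] \leq 2^{m-1}\bigl(E[\norm{x_d^+(k)}^m] + E[\norm{x_d^-(k)}^m]\bigr).
\end{equation*}
Since $x_0^+$ and $x_0^-$ are nonnegative, the hypothesized bound \eqref{eq:mu:expsta} applies to each term, yielding
\begin{equation*}
E[\norm{x_d(k)}^m] \leq 2^{m-1} \alpha e^{-\beta k}\bigl(\norm{x_0^+}^m + \norm{x_0^-}^m\bigr) \leq 2^m \alpha\, e^{-\beta k} \norm{x_0}^m.
\end{equation*}
Setting $\alpha' = 2^m \alpha$ and keeping the same $\beta$ produces the desired bound for all $x_0 \in \mathbb{R}^n$.

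There is no significant obstacle; the argument is essentially linear-algebraic and uses positivity only indirectly (it is not actually needed to decompose $x_0$ or to apply linearity, but the hypothesis is retained because the lemma is stated specifically for positive $\Sigma_\mu$). The one minor subtlety worth noting is that the positivity of $\Sigma_\mu$ is invoked only to give the hypothesized exponential bound along the two specific nonnegative initial states $x_0^\pm$, each of which yields a probability-one event; no uniform-in-$x_0$ version of positivity is required.
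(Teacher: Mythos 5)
Your proof is correct. Note that the paper does not give an inline argument at all --- it simply defers to \cite[Lemma~3.6]{Ogura2013f} --- so your self-contained decomposition $x_0 = x_0^+ - x_0^-$ into entrywise positive and negative parts, combined with pathwise linearity, the triangle inequality, and $(a+b)^m \leq 2^{m-1}(a^m+b^m)$, is exactly the kind of elementary argument the citation is standing in for, and every step checks out (including the norm bounds $\norm{x_0^\pm}\leq\norm{x_0}$ and the final constant $2^m\alpha$). Your closing observation is also accurate and worth keeping in mind: the argument never uses positivity of $\Sigma_\mu$, so the equivalence actually holds for an arbitrary system $\Sigma_\mu$; the positivity hypothesis matters only because the lemma is invoked downstream (in the proof of Proposition~\ref{prop:stab:mu:char}) precisely in the positive case, where restricting to $x_0 \geq 0$ is what makes the subsequent estimate $\norm{y^{[m]}}_1 = 1_{n_m}^\top y^{[m]}$ available.
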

\begin{proof}
See the proof of \cite[Lemma~3.6]{Ogura2013f}.
\end{proof}

The next proposition characterizes the stability of $\Sigma_\mu$ in
terms of the spectral radius of a matrix. 

\begin{proposition}\label{prop:stab:mu:char}
Assume that either
\begin{enumerate}[label={\alph*.},ref={\alph*}]
\item \label{item:mu:even} $m$ is even or

\item \label{item:mu:invariant} $\Sigma_\mu$ is positive.
\end{enumerate}
Then the following conditions are equivalent. 
\begin{enumerate}
\item \label{item:mu:expsta} $\Sigma_\mu$ is exponentially $m$th mean
stable.

\item \label{item:mu:stosta} $\Sigma_\mu$ is stochastically $m$th mean
stable.

\item \label{item:mu:Shcsta} $E[F_0^{[m]}]$ is Schur stable.
\end{enumerate}
\end{proposition}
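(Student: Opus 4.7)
The plan is to pass from $x_d(k)$ to its $m$-lift $y_d(k) := x_d(k)^{[m]}$, which by~\eqref{eq:(AB)^[m]} obeys the linear recursion $y_d(k+1) = F_k^{[m]} y_d(k)$. Since the $F_k$'s are i.i.d., taking expectations gives $E[y_d(k)] = E[F_0^{[m]}]^k x_0^{[m]}$, while~\eqref{eq:norm^[m]} supplies the pointwise identity $\norm{y_d(k)} = \norm{x_d(k)}^m$, so that $E[\norm{x_d(k)}^m] = E[\norm{y_d(k)}]$. I would prove the chain (\ref{item:mu:expsta}) $\Rightarrow$ (\ref{item:mu:stosta}) $\Rightarrow$ (\ref{item:mu:Shcsta}) $\Rightarrow$ (\ref{item:mu:expsta}), where the first implication is immediate from the summability of $\alpha e^{-\beta k}$.

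For (\ref{item:mu:stosta}) $\Rightarrow$ (\ref{item:mu:Shcsta}), Jensen's inequality applied to the convex norm gives
\begin{equation*}
\norm{E[F_0^{[m]}]^k x_0^{[m]}} = \norm{E[y_d(k)]} \leq E[\norm{y_d(k)}] = E[\norm{x_d(k)}^m],
\end{equation*}
so $E[F_0^{[m]}]^k x_0^{[m]} \to 0$ for every $x_0 \in \mathbb{R}^n$. A polarization argument shows that $\{x^{[m]} : x \in \mathbb{R}^n\}$ spans $\mathbb{R}^{n_m}$: any $c \in \mathbb{R}^{n_m}$ orthogonal to this set would be the coefficient vector of a homogeneous polynomial of degree $m$ in $x$ vanishing identically, so $c = 0$. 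Expanding an arbitrary $z \in \mathbb{R}^{n_m}$ in a basis drawn from this spanning set and using linearity yields $E[F_0^{[m]}]^k z \to 0$ for every $z$, which in finite dimensions forces $\rho(E[F_0^{[m]}]) < 1$.

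The implication (\ref{item:mu:Shcsta}) $\Rightarrow$ (\ref{item:mu:expsta}) requires bounding $E[\norm{y_d(k)}]$ from above by a quantity linear in $E[y_d(k)]$, and here the two hypotheses play distinct roles. If $m$ is even, then $\norm{x}^m = (\sum_i x_i^2)^{m/2}$ is a homogeneous polynomial of degree $m$ in $x$, hence $\norm{x}^m = c^\top x^{[m]}$ for a fixed $c \in \mathbb{R}^{n_m}$; taking expectations gives $E[\norm{x_d(k)}^m] = c^\top E[F_0^{[m]}]^k x_0^{[m]}$, which decays geometrically by Gelfand's formula. If instead $\Sigma_\mu$ is positive, Lemma~\ref{lem:Lemma36result} lets me restrict to $x_0 \geq 0$, in which case $x_d(k) \geq 0$ a.s.\ and hence $y_d(k) \geq 0$ a.s.; then $\norm{y_d(k)}_2 \leq \norm{y_d(k)}_1 = 1_{n_m}^\top y_d(k)$, and taking expectations bounds $E[\norm{x_d(k)}^m]$ by $1_{n_m}^\top E[F_0^{[m]}]^k x_0^{[m]}$, which is again geometrically small. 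The main obstacle is exactly this last step: Jensen runs only in one direction, and the two algebraic tricks above (linearity of $\norm{\cdot}^m$ in $x^{[m]}$ when $m$ is even, versus identifying the $1$-norm with $1_{n_m}^\top(\cdot)$ on the nonnegative cone) are what let one reverse it.
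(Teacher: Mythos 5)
Your proof is correct and follows essentially the same route as the paper's: the same implication cycle, the same identity $E[x_d(k)^{[m]}] = E[F_0^{[m]}]^k x_0^{[m]}$, the same spanning property of $\{x^{[m]} : x \in \mathbb{R}^n\}$, and the same case split (even $m$ handled by expressing $E[\norm{x_d(k)}^m]$ through a linear functional of $E[x_d(k)^{[m]}]$, positivity handled via the $1$-norm on the nonnegative cone after invoking Lemma~\ref{lem:Lemma36result}). Your two variants --- deducing $\rho(E[F_0^{[m]}])<1$ from $E[F_0^{[m]}]^k z \to 0$ for all $z$ rather than from summability along an eigenvector, and writing $\norm{x}^m = c^\top x^{[m]}$ directly for even $m$ instead of the paper's norm-equivalence manipulation --- are minor streamlinings, not a different argument.
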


\begin{proof}
We shall show the cycle [\ref{item:mu:expsta} $\Rightarrow$
\ref{item:mu:stosta} $\Rightarrow$ \ref{item:mu:Shcsta} $\Rightarrow$
\ref{item:mu:expsta}]. One can easily see [\ref{item:mu:expsta}
$\Rightarrow$ \ref{item:mu:stosta}].

[\ref{item:mu:stosta} $\Rightarrow$ \ref{item:mu:Shcsta}]: Let
$x(\cdot;x_0)$ denote the trajectory of $\Sigma_\mu$ with the initial
state~$x_0$. Since the identity \eqref{eq:(AB)^[m]} shows
$E[x(k+1;x_0)^{[m]}] = E[F_0^{[m]}]E[x(k;x_0)^{[m]}]$, an induction
with respect to $k$ yields
\begin{equation}\label{eq:0->k}
E[x(k;x_0)^{[m]}] = E[F_0^{[m]}]^k x_0^{[m]}. 
\end{equation}
Now assume that $\Sigma_\mu$ is stochastically $m$th mean stable. Let
$\lambda$ be an eigenvalue of $E[F_0^{[m]}]$ with a corresponding
eigenvector~$v\in \mathbb{C}^{n_m}$. Since the set $\{x^{[m]}:
x\in\mathbb{R}^n  \}$ spans $\mathbb{R}^{n_m}$
(\cite[Lemma~1.5]{Ogura2013f}), there exist $y_1, \dotsc, y_\ell
\in\mathbb{R}^n$ and $c_1, \dotsc, c_\ell \in \{1, \sqrt{-1}\}$ such
that $v = \sum_{i=1}^{\ell}c_i y_i^{[m]}$. Multiplying
$E[F_0^{[m]}]^k$ to this equation we obtain $\lambda^k v =
\sum_{i=1}^{\ell}c_i E[x(k;y_i)^{[m]}]$ by \eqref{eq:0->k}. Therefore,
by the triangle inequality and \eqref{eq:norm^[m]}, we can see
$\abs{\lambda}^k \norm{v} \leq \sum_{i=1}^{\ell}
E[\norm{x(k;y_i)}^m]$. Since the right hand side of this inequality is
summable with respect to $k$ by the stochastic $m$th mean stability
of $\Sigma_\mu$ and also $\norm{v} \neq 0$, we conclude $\abs \lambda
< 1$.

[\ref{item:mu:Shcsta} $\Rightarrow$ \ref{item:mu:expsta}]: Assume that
$E[F_0^{[m]}]$ is Schur stable. By \eqref{eq:0->k}, there exist
\change{$C>0$}{$\alpha>0$} and $\beta >0$ such that
\begin{equation}\label{eq:pf:base}
\norm{E[x(k)^{[m]}]} 
\leq 
\alpha e^{-\beta k} \norm{x_0^{[m]}} 
= 
\alpha e^{-\beta k} \norm{x_0}^m
\end{equation}
for every $k\geq 0$. We shall show that $\Sigma_\mu$ is exponentially
$m$th mean stable. Let $x_0$ and $k\geq 0$ be arbitrary and write $y
= x(k)$. We consider the two cases \ref{item:mu:even} and
\ref{item:mu:invariant} separately. First assume that $m$ is even.
Take positive constants $C_1$ and $C_2$ such that $C_1 \norm{\cdot}_1
\leq \norm{\cdot} \leq C_2 \norm{\cdot}_m$. Then
\begin{equation}\label{eq:pf:evenm:1}
E[\norm{y}^m] 
\leq 
C_2^m E[\norm{y}^m_m] 
=
C_2^m \sum_{i=1}^{n} \Abs{E[y_i^m]}. 
\end{equation} 
Since the random vector $y^{[m]}$ has all the monomials $y_i^m$
($i=1$, $\cdots$, $m$), we can see that $\sum_{i=1}^{n} \abs{E[y_i^m]}
\leq \norm{E[y]^{[m]}}_1\leq C_1^{-1}\norm{E[y]^{[m]}}$. Therefore
this inequality together with \eqref{eq:pf:evenm:1} and
\eqref{eq:pf:base} shows that $\Sigma_\mu$ is exponentially $m$th mean
stable.

Next assume that $\Sigma_\mu$ is positive. Notice that, by
Lemma~\ref{lem:Lemma36result}, without loss of generality we can
assume $x_0 \geq 0$, which implies $y \geq 0$ and hence $y^{[m]} \geq
0$ with probability one. Let us take a positive constant $C_3$ such
that $\norm{\cdot} \leq C_3 \norm{\cdot}_1$. Then we have $\norm{y}^m
=\norm{y^{[m]}} \leq C_3 \norm{y^{[m]}}_1 = C_3 1_{n_m}^\top y^{[m]}$
with probability one. Therefore, the Schwartz inequality shows
$E[\norm{y}^m] \leq C_3 1_{n_m}^\top E[y^{[m]}] \leq C_3
\norm{1_{n_m}}\, \norm{E[y^{[m]}]}$. This inequality and
\eqref{eq:pf:base} prove the exponential $m$th mean stability of
$\Sigma_\mu$.
\end{proof}

\begin{remark}
Proposition~\ref{prop:stab:mu:char} improves the stability condition
in \cite{Ogura2012b} by reducing the computational cost for checking
mean stability. The size $n_m$ of the matrix $E[F_0^{[m]}]$ is far
less than the size $n^m$ of the matrix used in
\cite[Theorem~5.1]{Ogura2012b}. Also the proof presented above is
simpler than the proof of \cite[Theorem~5.1]{Ogura2012b}, which needs
the approximation of $\mu$ by a sequence of finitely supported
probability measures.
\end{remark}

\subsection{Proof of the main result}\label{subsec:proof}

Let $\Sigma$ be a regenerative switched linear system satisfying the
conditions \ref{item:evenORnon-neg} to \ref{item:Mbdd} and let $x$ be
the trajectory of $\Sigma$.  Then the discretized process
$\{x_d(k)\}_{k\geq 0}$ given by $x_d(k) = x(Z_k)$ is clearly the
solution of the discrete-time system
\begin{equation*}
\mathcal S \Sigma:  x_d(k+1) = M_kx_d(k),\ k\geq 0
\end{equation*}
where $M_k$ is defined by \eqref{eq:def:M_k}.
Proposition~\ref{prop:stab:mu:char} immediately gives the next
corollary on the stability of $\mathcal S \Sigma$.

\begin{corollary}\label{cor:char}
The following conditions are equivalent.
\begin{enumerate}
\item $\mathcal S \Sigma$ is exponentially $m$th mean stable. 

\item $\mathcal S \Sigma$ is stochastically $m$th mean stable. 

\item $E[M_0^{[m]}]$ is Schur stable. 
\end{enumerate}
\end{corollary}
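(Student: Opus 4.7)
The plan is to verify that the discrete-time system $\mathcal S \Sigma$ meets the hypotheses of Proposition~\ref{prop:stab:mu:char} and then invoke that proposition directly. So I would set $F_k = M_k$ and check the two things I need: that the $M_k$ are i.i.d., and that either $m$ is even or $\mathcal S \Sigma$ is positive in the sense of the discrete-time definition.

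First I would establish that $\{M_k\}_{k\geq 0}$ is an i.i.d.\ sequence. The random matrix $M_k$ is a measurable functional of the restriction of the switching path $\sigma$ to the interval $[Z_k, Z_{k+1})$, together with the cycle length $R_{k+1} = Z_{k+1} - Z_k$. The definition of a regenerative process, applied iteratively, says exactly that the successive cycles $\{\sigma_t\}_{0\leq t < R_1}$, $\{\sigma_{R_1 + t}\}_{0 \leq t < R_2}$, \ldots are independent and identically distributed (together with the corresponding cycle lengths). Since $M_k$ is obtained from the $(k+1)$-st cycle by the same integration rule for every $k$, the random matrices $M_k$ inherit independence and identical distribution from the cycles.

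Next I would handle the alternative in the hypothesis of Proposition~\ref{prop:stab:mu:char}. If $m$ is even this is immediate. Otherwise Assumption~\ref{item:evenORnon-neg} guarantees $\Sigma$ is positive, meaning $x_0 \geq 0$ implies $x(t) \geq 0$ with probability one for all $t \geq 0$. Specializing this to $t = Z_k$ shows that $x_d(k) = x(Z_k) \geq 0$ with probability one whenever $x_0 \geq 0$, so $\mathcal S \Sigma$ is positive in the sense of the discrete-time definition.

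With these two facts in hand, Proposition~\ref{prop:stab:mu:char} applied with $F_k = M_k$ gives the equivalence of exponential $m$th mean stability of $\mathcal S \Sigma$, stochastic $m$th mean stability of $\mathcal S \Sigma$, and Schur stability of $E[M_0^{[m]}]$, which is precisely the corollary. The main obstacle is the first step, where I rely on unpacking the regenerative definition to conclude that the $M_k$ are genuinely i.i.d.\ (not merely stationary); this uses the measurability of $M_k$ with respect to the $\sigma$-algebra generated by the $(k+1)$-st cycle, and the standard iteration of the regeneration property noted in the exposition right after the definition of a regenerative process.
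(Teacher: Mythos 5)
Your proposal is correct and follows exactly the same route as the paper's (much terser) proof: identify $M_k$ as a functional of the $(k{+}1)$-st cycle to get the i.i.d.\ property from the regeneration structure, note that Assumption~\ref{item:evenORnon-neg} supplies hypothesis \ref{item:mu:even} or \ref{item:mu:invariant} (positivity of $\mathcal S\Sigma$ following by evaluating the positivity of $\Sigma$ at $t=Z_k$), and then invoke Proposition~\ref{prop:stab:mu:char} with $F_k=M_k$. The extra detail you supply on measurability with respect to the cycle $\sigma$-algebra is a legitimate filling-in of what the paper leaves implicit, not a different argument.
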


\begin{proof}
The random variables $\{M_k\}_{k=0}^\infty$ are independent and
identically distributed by the definition of regenerating processes.
Also \ref{item:evenORnon-neg} automatically ensures that one of the
conditions \ref{item:mu:even} and \ref{item:mu:invariant} in
Proposition~\ref{prop:stab:mu:char} is satisfied.
\end{proof}

We will also need the next lemma to prove the main result.

\begin{lemma}
There exists $C>1$ such that
\begin{equation}\label{eq:c:C1C2}
C^{-1} \norm{x(Z_k)} \leq \norm{x(t)} \leq C \norm{x(Z_k)}
\end{equation}
for all $k\geq 0$ and $t\in [Z_k, Z_{k+1}]$.
\end{lemma}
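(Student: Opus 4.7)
The plan is to exploit the two boundedness assumptions \ref{item:hk<T} and \ref{item:Mbdd} to control the fundamental matrix $\Phi$ over a cycle by a deterministic constant. By \ref{item:hk<T}, there exists $T>0$ such that $R_1\leq T$ with probability one, and hence $Z_{k+1}-Z_k \leq T$ almost surely for every $k\geq 0$. By \ref{item:Mbdd}, there exists $M>0$ such that $\norm{A_s}\leq M$ for every $s\in \mathcal S$.

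Fix $\omega\in\Omega$ outside a null set, $k\geq 0$, and $t\in [Z_k(\omega), Z_{k+1}(\omega)]$. By the definition of the fundamental matrix we have $x(t) = \Phi(\omega; Z_k(\omega), t)\,x(Z_k(\omega))$. First I would apply Gr\"onwall's inequality to the defining ODE $\partial \Phi/\partial t = A_{\sigma_t(\omega)}\Phi$ with $\Phi(\omega; Z_k, Z_k) = I_n$ to obtain
\begin{equation*}
\norm{\Phi(\omega; Z_k(\omega), t)}
\leq \exp\!\left(\int_{Z_k(\omega)}^{t} \norm{A_{\sigma_\tau(\omega)}}\,d\tau\right)
\leq e^{M(t-Z_k(\omega))} \leq e^{MT}.
\end{equation*}
This yields the upper estimate $\norm{x(t)}\leq e^{MT}\norm{x(Z_k(\omega))}$.

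For the lower estimate, I would use that $\Phi(\omega; Z_k(\omega), t)$ is invertible, with inverse $\Psi(t) := \Phi(\omega; Z_k(\omega), t)^{-1}$ satisfying $d\Psi/dt = -\Psi\,A_{\sigma_t(\omega)}$ and $\Psi(Z_k(\omega)) = I_n$. Gr\"onwall again bounds $\norm{\Psi(t)}\leq e^{MT}$, so $\norm{x(Z_k(\omega))} = \norm{\Psi(t)\,x(t)}\leq e^{MT}\norm{x(t)}$. Setting $C = e^{MT}>1$ gives both inequalities in \eqref{eq:c:C1C2} simultaneously and uniformly in $k$, $t$, and $\omega$.

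There is no substantial obstacle here: the only subtlety is to observe that the bound $T$ on $R_1$ transfers to every cycle length $R_{k+1} = Z_{k+1}-Z_k$, which is immediate from the fact that cycle lengths are i.i.d., and that the deterministic nature of $M$ and $T$ makes $C$ independent of $\omega$, $k$, and $t$ as required.
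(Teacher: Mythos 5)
Your proof is correct and follows essentially the same route as the paper: bound $\norm{A_{\sigma_t}}$ by $M$ via A3, bound the cycle length via A2, and apply Gr\"onwall's inequality in both directions. The only cosmetic difference is that you run Gr\"onwall on the fundamental matrix and its inverse rather than directly on $\norm{x(t)}$, which usefully makes explicit the lower bound that the paper dismisses with ``similarly.''
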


\begin{proof}
Let $t \in [Z_k, Z_{k+1}]$. By \ref{item:Mbdd} there exists a
constant, say, $M>0$, such that $\norm{A_s} \leq M$ for every $s\in
\mathcal S$. Since $x(t) = \int_{Z_k}^t A_{\sigma_\tau}x(\tau)\,d\tau
+ x(Z_k)$ we have $\norm{x(t)} \leq \int_{Z_k}^t M
\norm{x(\tau)}\,d\tau + \norm{x(Z_k)}$. Then Gronwall's inequality and
\ref{item:hk<T} shows $\norm{x(t)} \leq
e^{M\mathchangescript{T}{\norm{R_1}}}\norm{x(Z_k)}$\add{, where
$\norm{R_1}$ denotes the essential supremum of $R_1$}. Similarly we
can show $e^{-M\mathchangescript{T}{\norm{R_1}}}\norm{x(Z_k)} \leq
\norm{x(t)}$. This completes the proof.
\end{proof}

Now we prove the main result of this paper.

\begin{proof}[Proof of Theorem~\ref{theorem:main}]
We shall show the cycle [\ref{item:expsta} $\Rightarrow$
\ref{item:stosta} $\Rightarrow$ \ref{item:Schsta} $\Rightarrow$
\ref{item:expsta}]. It is obvious to prove [\ref{item:expsta}
$\Rightarrow$ \ref{item:stosta}].

[\ref{item:stosta} $\Rightarrow$ \ref{item:Schsta}]: By inequality
\eqref{eq:c:C1C2} and the definition of regenerative processes, for
every $k$ we can show
\begin{equation}\label{eq:proof23}
\begin{aligned}
E\left[\int_{Z_k}^{Z_{k+1}} \norm{x(t)}^m\,dt\right]
&\geq
C^{- m} E\left[\norm{x(Z_k)}^m\int_{Z_k}^{Z_{k+1}} dt\right]
\\
&=
C^{- m} E[R_{k+1}] E[\norm{x_d(k)}^m]
\\
&=
C^{- m} E[R_{1}] E[\norm{x_d(k)}^m], 
\end{aligned}
\end{equation}
where we used the definition of regenerative processes. Since Fubini's
theorem shows $\int_0^\infty E[\norm{x(t)}^m]\, dt = \sum_{k=0}^\infty
E\Bigl[ \int_{Z_k}^{Z_{k+1}} \norm{x(t)}^m\,dt \Bigr]$, taking the
summation about $k$ in \eqref{eq:proof23} yields
\begin{equation*}
C^{- m} E[R_1]
\sum_{k=0}^\infty E[\norm{x_d(k)}^m] \leq \int_0^\infty
E[\norm{x(t)}^m]\,dt < \infty.
\end{equation*}
Therefore $\mathcal S\Sigma$ is stochastically $m$th mean stable
because both $C$ and $E[R_1]$ are positive. Hence
Corollary~\ref{cor:char} implies that $E[M_0^{[m]}]$ is Schur stable.

[\ref{item:Schsta} $\Rightarrow$ \ref{item:expsta}]: Here we employ
the idea used in the proof of the sufficiency part for
\cite[Theorem~2.5]{Ogura2013f}. Assume that $E[M_0^{[m]}]$ is Schur
stable. Then $\mathcal S \Sigma$ is exponentially $m$th mean stable
by Corollary~\ref{cor:char}. Let $x_0$ and $t\geq 0$ be arbitrary. Let
us define
$k_t = \max\{ k \in \mathbb{N}: Z_k \leq t \}$.
Since $Z_{k_t} \leq t<Z_{k_t+1}$, the inequality \eqref{eq:c:C1C2}
gives $\norm{x(t)} \leq C \norm{x(Z_{k_t})}= C \norm{x_d(k_t)}$.
Therefore
\begin{equation}\label{eq;pf;Pre}
E[\norm{x(t)}^m] \leq C^m E[\norm{x_d(k_t)}^m].
\end{equation}
On the other hand, since \ref{item:hk<T} shows $t<Z_{k_t + 1} \leq
\mathchange{T}{\norm{R_1}}(k_t +1)$ we have $k_t >
\mathchange{T}{\norm{R_1}}^{-1}t -1$. This implies $\norm{x_d(k_t)}^m
\leq \sum_{k > \mathchangescript{T}{\norm{R_1}}^{-1}t -1}
\norm{x_d(k)}^m$. Taking the expectation in this inequality and then
using the $m$th mean stability of $\mathcal S \Sigma$ we obtain
$E[\norm{x_d(k_t)}^m]  \leq \alpha' e^{-\beta' t} \norm{x_0}^m$, where
$\alpha' = {\alpha e^\beta}/{(1-e^{-\beta})}$ and $\beta' =
\beta/\mathchange{T}{\norm{R_1}}$ \add{are positive constants}. This
inequality and \eqref{eq;pf;Pre} show the $m$th mean exponential
stability of $\Sigma$.
\end{proof}

%
%
%
%
%
%
%

\section{Discrete-time case} \label{sec:disc}

This section briefly discusses the stability characterization of
regenerative switched linear systems in discrete-time. Let $\sigma =
\{\sigma_k\}_{k=0}^\infty$ be a regenerative process taking values in
a set $\mathcal S$ and defined on the set of nonnegative
integers~$\{0, 1, \dotsc \}$. Let $\{A_s\}_{s\in\mathcal S}$ be a
family of ${n\times n}$ real matrices.  Consider the discrete-time
regenerative switched linear system
\begin{equation*}
\Sigma_d: x(k+1) = A_{\sigma_k} x(k),\ k\geq 0.
\end{equation*}
The exponential and stochastic mean stability of $\Sigma_d$ are
defined as in Definition~\ref{defn:sta:sigmamu}. In addition to the
assumptions~\ref{item:evenORnon-neg} to \ref{item:Mbdd} we place the
next assumption on $\Sigma_d$:
\begin{enumerate}[label={A\arabic*. },ref={A\arabic*}]
\setcounter{enumi}{3} \item \label{item:d:invMbdd} $A_s$ is invertible
for each $s\in \mathcal S$ and the set $\{A_s^{-1}\}_{s\in\mathcal S}$
is bounded.
\end{enumerate}
For each $k\geq 0$ we define the transition matrix $M_{d,k}$ for
$\Sigma_d$ representing the transition of $x$ from $t=Z_k$ to
$t=Z_{k+1}$ in the same way as we defined $M_k$ for continuous-time
regenerative switched linear systems in \eqref{eq:def:M_k}. The next
theorem is a discrete-time counterpart of Theorem~\ref{theorem:main}.

\begin{theorem}
The following statements are equivalent.
\begin{enumerate}
\item $\Sigma_d$ is exponentially $m$th mean stable.

\item $\Sigma_d$ is stochastically $m$th mean stable.

\item $E\bigl[M_{d,0}^{[m]}\bigr]$ is Schur stable.
\end{enumerate}
\end{theorem}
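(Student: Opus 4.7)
The plan is to parallel the proof of Theorem~\ref{theorem:main} almost line for line, using Proposition~\ref{prop:stab:mu:char} in place of the continuous-time Corollary~\ref{cor:char} and replacing integrals over $[Z_k,Z_{k+1}]$ with sums over the integers in $[Z_k,Z_{k+1})$. The sampled sequence $x_d(k) = x(Z_k)$ satisfies $x_d(k+1) = M_{d,k} x_d(k)$, and by the regenerative property $\{M_{d,k}\}_{k\geq 0}$ is i.i.d.; under \ref{item:evenORnon-neg} the hypotheses of Proposition~\ref{prop:stab:mu:char} hold, so the three conditions (exponential and stochastic $m$th mean stability of the sampled system and Schur stability of $E[M_{d,0}^{[m]}]$) are already equivalent. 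All that remains is to pass between the trajectory $\{x(\ell)\}_{\ell\geq 0}$ and the sampled sequence in both directions.

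The key auxiliary step is a discrete analogue of inequality~\eqref{eq:c:C1C2}: there exists $C>1$ with
\begin{equation*}
C^{-1}\norm{x(Z_k)} \leq \norm{x(\ell)} \leq C\norm{x(Z_k)}
\end{equation*}
for every $k\geq 0$ and every integer $\ell$ with $Z_k\leq \ell \leq Z_{k+1}$. The upper bound follows from \ref{item:hk<T} and \ref{item:Mbdd}, since $x(\ell)$ is obtained from $x(Z_k)$ by applying at most $\norm{R_1}$ matrices drawn from a uniformly bounded family. The lower bound is where assumption \ref{item:d:invMbdd} enters: writing
\begin{equation*}
x(Z_k) = A_{\sigma_{Z_k}}^{-1}\dotsm A_{\sigma_{\ell-1}}^{-1} x(\ell)
\end{equation*}
and applying the uniform bound on $\{A_s^{-1}\}_{s\in\mathcal S}$ together with \ref{item:hk<T} yields a constant of the same form. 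With this comparison in hand, the cycle [exponential $\Rightarrow$ stochastic $\Rightarrow$ Schur $\Rightarrow$ exponential] transcribes from Theorem~\ref{theorem:main}: for the middle implication the lower bound gives $\sum_{\ell=Z_k}^{Z_{k+1}-1}\norm{x(\ell)}^m \geq C^{-m}R_{k+1}\norm{x_d(k)}^m$, and Fubini together with the i.i.d.\ property of cycle lengths forces summability of $E[\norm{x_d(k)}^m]$; for the final implication, setting $k_\ell = \max\{k:Z_k\leq \ell\}$ and using $k_\ell > \norm{R_1}^{-1}\ell - 1$, the upper bound converts geometric decay of $\sum_{k > \norm{R_1}^{-1}\ell - 1} E[\norm{x_d(k)}^m]$ into geometric decay of $E[\norm{x(\ell)}^m]$.

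The main obstacle is the lower bound in the comparison inequality, which is the sole substantive addition to the continuous-time argument. In continuous time, Gronwall's inequality supplied both bounds from \ref{item:Mbdd} alone; in discrete time, without uniformly bounded invertibility the trajectory could collapse between successive regeneration epochs while the sampled sequence remained large, decoupling the two stability notions and breaking the implication [stochastic $\Rightarrow$ Schur]. Once \ref{item:d:invMbdd} is in force, the remainder of the proof is a routine transcription.
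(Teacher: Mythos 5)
Your proposal is correct and follows essentially the same route as the paper: the paper's own proof consists precisely of establishing the discrete analogue of \eqref{eq:c:C1C2} from \ref{item:Mbdd} and \ref{item:d:invMbdd} (with \ref{item:hk<T} bounding the number of steps per cycle) and then transcribing the cycle of implications from Theorem~\ref{theorem:main}, with the details omitted. Your identification of the lower bound as the one place where \ref{item:d:invMbdd} is genuinely needed, replacing the role Gronwall's inequality played in continuous time, is exactly the point the paper leaves implicit.
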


\begin{proof}
Let  $\{Z_k\}_{k=0}^\infty$ be the embedded renewal process of
$\sigma$. Using \ref{item:Mbdd} and \ref{item:d:invMbdd} we can show
the existence of a constant $C > 1$ such that, for every $k\geq 0$, if
$Z_k \leq \ell \leq Z_{k+1}$ then $C^{-1} \norm{x_d(Z_k)} \leq
\norm{x_d(\ell)} \leq C \norm{x_d(Z_k)}$. Then we can prove the
desired equivalence in the same way as in the proof of
Theorem~\ref{theorem:main}. The details are omitted.
\end{proof}

\section{Conclusion}

In this paper we investigated the mean stability of regenerative
switched linear systems. A necessary and sufficient condition for the
$m$th mean stability of regenerative switched linear systems
\change{is}{was} established under the assumption that either $m$ is
even or the system is positive and that the length of each cycle of
the underlying regenerative process is essentially bounded. The proof
used a  discretization of the system at the embedded renewal process
of the underlying regenerative process. A numerical example was
presented to illustrate the result.

%











\end{document}